\newtheorem{theorem}{Theorem}[section]
\newtheorem{definition}[theorem]{Definition}
\newtheorem{lemma} [theorem]{Lemma}
\newtheorem{problem}[theorem]{Problem}
\newtheorem{question}[theorem]{Question}
\title{This is the title}
\begin{document}
\hrule\hrule\hrule\hrule\hrule
\vspace{0.3cm}	
\begin{center}
{\bf{Product Entropic Uncertainty Principle}}\\
\vspace{0.3cm}
\hrule\hrule\hrule\hrule\hrule
\vspace{0.3cm}
\textbf{K. Mahesh Krishna}\\
School of Mathematics and Natural Sciences\\
Chanakya University Global Campus\\
NH-648, Haraluru Village\\ 
Devanahalli Taluk, 	Bengaluru  Rural District\\
Karnataka  562 110, India\\
Email: kmaheshak@gmail.com\\

Date: \today
\end{center}

\hrule\hrule
\vspace{0.5cm}
%--------------------------------------
\textbf{Abstract}: Motivated from Deutsch entropic uncertainty principle and several product uncertainty principles, we derive an uncertainty principle for the product of entropies using  functions.

\textbf{Keywords}:   Uncertainty Principle,  Frame.

\textbf{Mathematics Subject Classification (2020)}: 42C15.\\

\hrule

%\tableofcontents
\hrule
\section{Introduction}
Let $\mathcal{H}$ be a finite dimensional Hilbert space. Given an orthonormal basis  $\{\omega_j\}_{j=1}^n$ for $\mathcal{H}$, the \textbf{Shannon entropy}  at a point $h \in \mathcal{H}_\tau$ is defined as 
\begin{align*}
	S_\tau (h)\coloneqq  \sum_{j=1}^{n} \left|\left \langle h, \tau_j\right\rangle \right|^2\log \left(\frac{1}{\left|\left \langle h, \tau_j\right\rangle \right|^2}\right)\geq 0, 
\end{align*}
where $\mathcal{H}_\tau \coloneqq \{h \in \mathcal{H}:\|h\|=1,  \langle h , \tau_j \rangle\neq 0, 1\leq j \leq n \}$. In 1983, Deutsch derived following uncertainty principle for Shannon entropy  \cite{DEUTSCH}.
\begin{theorem} (\textbf{Deutsch Uncertainty Principle}) \cite{DEUTSCH} \label{DU}
Let $\{\tau_j\}_{j=1}^n$,  $\{\omega_j\}_{j=1}^n$ be two orthonormal bases for a  finite dimensional Hilbert space $\mathcal{H}$. Then 
	\begin{align}\label{DUP}
2 \log n \geq S_\tau (h)+S_\omega (h)\geq -2 \log \left(\frac{1+\displaystyle \max_{1\leq j, k \leq n}|\langle\tau_j , \omega_k\rangle|}{2}\right)	\geq 0, \quad \forall h \in \mathcal{H}_\tau \cap \mathcal{H}_\omega.
	\end{align}
\end{theorem}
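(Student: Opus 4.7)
The plan is to establish the three inequalities separately, working from right to left, since the middle bound carries essentially all of the content. For the rightmost non-negativity, note that $\tau_j$ and $\omega_k$ are unit vectors, so Cauchy--Schwarz gives $c := \max_{j,k}|\langle \tau_j,\omega_k\rangle| \le 1$, whence $(1+c)/2\le 1$ and the logarithm is non-positive. For the upper bound, Parseval makes $p_j:=|\langle h,\tau_j\rangle|^2$ a probability distribution, and Jensen's inequality applied to the concave function $t\mapsto -t\log t$ (equivalently, Gibbs' inequality against the uniform distribution) gives $S_\tau(h)\le\log n$; summing with the analogous bound for $S_\omega$ delivers $2\log n$.

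For the lower bound I would first reduce to a scalar inequality. Since $\log(1/p_j)\ge\log(1/\max_i p_i)$ for each $j$ and $\sum_j p_j=1$, one immediately gets
\[
S_\tau(h)\ge -\log\!\Bigl(\max_j|\langle h,\tau_j\rangle|^2\Bigr),
\]
and likewise for $S_\omega$. Writing $M_\tau:=\max_j|\langle h,\tau_j\rangle|$ and $M_\omega:=\max_k|\langle h,\omega_k\rangle|$, it therefore suffices to prove the scalar bound $M_\tau M_\omega\le(1+c)/2$; monotonicity of $-\log$ then finishes the job.

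For the scalar bound, let $j_0,k_0$ attain the two maxima and set $x:=|\langle\omega_{k_0},\tau_{j_0}\rangle|\le c$. Expanding $\tau_{j_0}$ in the basis $\{\omega_k\}$, isolating the $k=k_0$ term, and applying Cauchy--Schwarz to the tail with the Parseval identities $\sum_{k\ne k_0}|\langle h,\omega_k\rangle|^2=1-M_\omega^2$ and $\sum_{k\ne k_0}|\langle\omega_k,\tau_{j_0}\rangle|^2=1-x^2$ yields
\[
M_\tau\;\le\;xM_\omega+\sqrt{(1-M_\omega^2)(1-x^2)}.
\]
The trigonometric substitution $M_\omega=\sin\theta$, $x=\sin\phi$ converts the right-hand side into $\cos(\theta-\phi)$, so that $M_\tau M_\omega\le\sin\theta\cos(\theta-\phi)\le(1+\sin\phi)/2=(1+x)/2\le(1+c)/2$, where the middle inequality is a routine one-variable maximization (product-to-sum identity).

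The main obstacle is precisely this last scalar bound: one must separate the dominant index $k_0$ before applying Cauchy--Schwarz (applying it directly to the full sum would give only the useless $M_\tau\le 1$) and then carry out the resulting one-variable optimization cleanly. Once that inequality is in hand, everything else reduces to standard log and entropy manipulations.
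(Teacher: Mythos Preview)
Your argument is correct. The paper does not supply an explicit proof of this theorem (it is cited from Deutsch), but immediately after stating it the paper remarks that the result---and indeed its extension to Parseval frames---follows from the Buzano inequality (Lemma~\ref{BI}), and this is precisely the mechanism used in the proof of Theorem~\ref{PD}. So the intended route is: for unit vectors $h,\tau_j,\omega_k$ Buzano gives
\[
|\langle h,\tau_j\rangle|\,|\langle h,\omega_k\rangle|\;\le\;\frac{1+|\langle\tau_j,\omega_k\rangle|}{2}\;\le\;\frac{1+c}{2}
\]
in one stroke, and the rest proceeds exactly as you wrote (bounding each entropy below by $-\log$ of the largest coefficient, etc.).

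What you do differently is that you \emph{reprove}, from scratch, the special case of Buzano that is needed here: your expansion of $\tau_{j_0}$ in the basis $\{\omega_k\}$, isolation of the $k_0$ term, Cauchy--Schwarz on the tail, and the trigonometric optimisation together yield $M_\tau M_\omega\le(1+x)/2$, which is exactly Buzano applied to the triple $(\tau_{j_0},h,\omega_{k_0})$. This is essentially Deutsch's original 1983 derivation. The trade-off is clear: your argument is self-contained and uses nothing beyond Cauchy--Schwarz, but it leans on the Parseval identities for the orthonormal basis $\{\omega_k\}$ and so does not immediately extend to $1$-bounded Parseval frames; the Buzano route is a single invocation of a named lemma and works verbatim in the frame setting the paper cares about.
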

Using Buzano inequality, it is easy to see that Theorem \ref{DU} holds for Parseval frames \cite{KRISHNA}. Note that  Inequality (\ref{DUP}) is for the sum of entropies and not for the product. It is best if we have uncertainty principles for the product because  once we have an uncertainty principle for the product, then the uncertainty principle for the sum follows from the AM-GM inequality. Following is a small list of uncertainty principle (UP) for the product.
\begin{enumerate}
	\item Heisenberg-Robertson-Schrodinger UP and its various generalizations \cite{SCHRODINGER, VONNEUMANNBOOK, ROBERTSON, HEISENBERG, FOLLANDSITARAM, SELIG, GOHMICCHELLI}.
	\item Donoho-Stark-Elad-Bruckstein-Ricaud-Torrésani support size UP \cite{ELADBRUCKSTEIN, RICAUDTORRESANI, DONOHOSTARK}.
	\item Smith UP and its generalizations \cite{SMITH, ALAGICRUSSELL, CHUANGNG, MESHULAM}.
	\item Kuppinger-Durisi-Bölcskei support size UP and its generalization \cite{KUPPINGER, STUDER}.
	\item A. Wigderson-Y. Wigderson UP \cite{WIGDERSON}.
	\item Maccone-Pati UP \cite{MACCONEPATI}.
	\item Goh-Goodman UP \cite{GOHGOODMAN}.
	\item Jiang-Liu-Wu subfactor UP \cite{JIANGLIUWU}.
\item Bandeira-Lewis-Mixon	numerical sparsity UP \cite{BANDEIRALEWISMIXON}.
	\item UP over finite fields \cite{EVRA, BORELLO, FENG}.
	\item Generalized UP \cite{BOSSOLUCIANO, TAWFIKDIAB, KEMPFMANGANOMANN}.
\end{enumerate}
	It seems that for logarithm functions, we can not have uncertainty for the product of entropies using coherence. In this paper,  we derive an uncertainty principle for  the product of entropies coming from certain functions.

\section{Product Entropic Uncertainty Principle}
In the paper,   $\mathbb{K}$ denotes $\mathbb{C}$ or $\mathbb{R}$ and $\mathcal{H}$  denotes a Hilbert space  (need not be finite dimensional) over $\mathbb{K}$.  We need the notion of continuous frame which  is  introduced independently by Ali, Antoine and Gazeau \cite{ALIANTOINEGAZEAU} and Kaiser \cite{KAISER}. 
\begin{definition}\cite{ALIANTOINEGAZEAU, KAISER}
	Let 	$(\Omega, \mu)$ be a measure space. A collection   $\{\tau_\alpha\}_{\alpha\in \Omega}$ in 	a  Hilbert  space $\mathcal{H}$ is said to be a \textbf{continuous Parseval frame}  for $\mathcal{H}$ if the following conditions hold.
	\begin{enumerate}[\upshape(i)]
		\item For each $h \in \mathcal{H}$, the map $\Omega \ni \alpha \mapsto \langle h, \tau_\alpha \rangle \in \mathbb{K}$ is measurable.
		\item 
		\begin{align*}
			\|h\|^2=\int\limits_{\Omega}|\langle h, \tau_\alpha \rangle|^2\,d\mu(\alpha), \quad \forall h \in \mathcal{H}.
		\end{align*}
	\end{enumerate}
\end{definition}
Recall that a continuous Parseval frame $\{\tau_\alpha\}_{\alpha\in \Omega}$   for $\mathcal{H}$ 	is said to be \textbf{1-bounded} if 
\begin{align*}
\|\tau_\alpha\|\leq 1, \quad \forall \alpha \in \Omega.
\end{align*}
Let $\phi:(0,1]\to (0, \infty)$ be a  function satisfying following.
\begin{enumerate}
	\item $\phi$ is continuous.
	\item $\phi$ is decreasing.
	\item $\phi(xy)\leq \phi(x)\phi(y)$, $ \forall x, y \in (0,1] $.
\end{enumerate}
In the entire paper, we assume that $\phi$ satisfies above conditions. Given such a $\phi$ and a 1-bounded continuous Parseval frame $\{\tau_\alpha\}_{\alpha\in \Omega}$   for $\mathcal{H}$, we define entropy 
	\begin{align*}
	S_{\tau,\phi}(h)=\int\limits_{\Omega}|\langle h, \tau_\alpha \rangle|^2\phi(|\langle h, \tau_\alpha \rangle|^2)\,d\mu(\alpha)\geq 0, \quad \forall h \in \mathcal{H}_\tau, 
\end{align*}
where $\mathcal{H}_\tau \coloneqq \{h \in \mathcal{H}:\|h\|=1,  \langle h , \tau_\alpha \rangle\neq 0, \forall \alpha \in \Omega \}$. Since $\phi$ is continuous, the integral exists. Before deriving the theorem   of this paper, we need a powerful inequality.
\begin{lemma} (\textbf{Buzano Inequality}) \cite{BUZANO, FFUJIIKUBO, STEELE} \label{BI}
	Let $\mathcal{H}$ be a Hilbert space. Then  
	\begin{align*}
		|\langle u, h \rangle \langle  h,v \rangle|\leq \|h\|^2\frac{\|u\|\|v\|+|\langle  u,v \rangle|}{2}, \quad \forall h, u, v \in \mathcal{H}.
	\end{align*}
\end{lemma}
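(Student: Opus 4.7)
The plan is to reduce to the unit-norm case and then exploit the reflection through the line spanned by $h$. The case $h=0$ is trivial (both sides vanish), so I would normalize by replacing $h$ with $h/\|h\|$; this turns the desired inequality into
$$|\langle u, h\rangle \langle h, v\rangle| \leq \tfrac{1}{2}\bigl(\|u\|\|v\|+|\langle u,v\rangle|\bigr)$$
with $\|h\|=1$, and the factor $\|h\|^2$ on the right is recovered simply by undoing the scaling at the end.

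Next I would let $P$ denote the orthogonal projection onto $\operatorname{span}\{h\}$, so that $Px = \langle x, h\rangle h$ for all $x$ and in particular $\langle Pu, v\rangle = \langle u, h\rangle\langle h, v\rangle$. The key observation is that the operator $U \coloneqq 2P - I$ is a self-adjoint involution (reflection through $\operatorname{span}\{h\}$): one checks $U^*=U$ and $U^2 = 4P^2 - 4P + I = I$, so $U$ is unitary and $\|U\|=1$. From the identity $P = \tfrac{1}{2}(I+U)$ I obtain
$$\langle Pu, v\rangle = \tfrac{1}{2}\langle u, v\rangle + \tfrac{1}{2}\langle Uu, v\rangle,$$
and applying the triangle inequality together with Cauchy--Schwarz and the isometry bound $\|Uu\|=\|u\|$ to the second summand yields
$$|\langle Pu, v\rangle| \leq \tfrac{1}{2}|\langle u, v\rangle| + \tfrac{1}{2}\|u\|\|v\|,$$
which is precisely the unit-norm form of the claim.

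The only conceptual step is recognizing that $2P-I$ is an isometry; once this is observed, nothing beyond Cauchy--Schwarz is used, so there is no real obstacle. An alternative route extends $h/\|h\|$ to an orthonormal basis and bounds $|\langle u,h\rangle \langle h,v\rangle|$ via Parseval and the AM--GM inequality applied to $|\langle u, h\rangle|^2$ and $|\langle v, h\rangle|^2$, but the reflection argument is shorter and is the one I would record.
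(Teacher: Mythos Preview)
Your argument is correct: the reduction to $\|h\|=1$, the identification $\langle Pu,v\rangle=\langle u,h\rangle\langle h,v\rangle$, and the reflection identity $P=\tfrac{1}{2}(I+U)$ with $U=2P-I$ unitary all go through exactly as you describe, and the final bound follows from the triangle inequality together with Cauchy--Schwarz.

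There is nothing to compare against in the paper itself: Lemma~\ref{BI} is quoted as a known result with references \cite{BUZANO, FFUJIIKUBO, STEELE} and is used without proof. Your reflection proof is in fact the argument of Fujii and Kubo (one of the cited sources), so you have independently reproduced the standard short proof that the paper defers to the literature.
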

\begin{theorem}\label{PD}
Let $(\Omega, \mu)$,  $(\Delta, \nu)$ be   measure spaces. Let $\{\tau_\alpha\}_{\alpha\in \Omega}$ and $\{\omega_\beta\}_{\beta\in \Delta}$ be 1-bounded 
	continuous Parseval frames   for $\mathcal{H}$. Then 
	\begin{align}\label{PDI}
	S_{\tau,\phi}(h)S_{\omega,\phi}(h)\geq 	\phi\left(\frac{\left(1+\sup_{\alpha\in \Omega, \beta\in \Delta}|\langle \tau_\alpha, \omega_\beta \rangle|\right)^2}{4}\right)\geq 0, \quad \forall h \in \mathcal{H}_\tau \cap \mathcal{H}_\omega.
	\end{align}
\end{theorem}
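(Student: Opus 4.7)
The plan is to combine Buzano's inequality with the three structural assumptions on $\phi$ (continuity, monotonicity, and submultiplicativity) and then integrate out against the frames using the Parseval identity.

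First I would fix $h \in \mathcal{H}_\tau\cap\mathcal{H}_\omega$, set $c \coloneqq \sup_{\alpha,\beta}|\langle \tau_\alpha,\omega_\beta\rangle|$, and apply Lemma \ref{BI} with $u=\tau_\alpha$ and $v=\omega_\beta$. Since the frames are 1-bounded and $\|h\|=1$, this yields
\begin{align*}
|\langle h,\tau_\alpha\rangle|^2\,|\langle h,\omega_\beta\rangle|^2 \;\leq\; \frac{(1+c)^2}{4}
\end{align*}
for every $\alpha\in\Omega$ and $\beta\in\Delta$. Note that all quantities lie in $(0,1]$, so $\phi$ can be applied.

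Next, because $\phi$ is decreasing and submultiplicative, I would chain two inequalities: the submultiplicativity gives
\begin{align*}
\phi\bigl(|\langle h,\tau_\alpha\rangle|^2\bigr)\,\phi\bigl(|\langle h,\omega_\beta\rangle|^2\bigr) \;\geq\; \phi\bigl(|\langle h,\tau_\alpha\rangle|^2\,|\langle h,\omega_\beta\rangle|^2\bigr),
\end{align*}
and the monotonicity together with the Buzano bound above gives
\begin{align*}
\phi\bigl(|\langle h,\tau_\alpha\rangle|^2\,|\langle h,\omega_\beta\rangle|^2\bigr) \;\geq\; \phi\!\left(\frac{(1+c)^2}{4}\right).
\end{align*}
Multiplying through by $|\langle h,\tau_\alpha\rangle|^2|\langle h,\omega_\beta\rangle|^2$ (a nonnegative weight) preserves the inequality.

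Finally I would integrate both sides with respect to $d\mu(\alpha)\,d\nu(\beta)$ and use Fubini. The left-hand side factors as the product of two one-variable integrals, which is exactly $S_{\tau,\phi}(h)\,S_{\omega,\phi}(h)$. The right-hand side becomes $\phi\!\left(\frac{(1+c)^2}{4}\right)$ times $\left(\int|\langle h,\tau_\alpha\rangle|^2d\mu\right)\left(\int|\langle h,\omega_\beta\rangle|^2d\nu\right)$, and the continuous Parseval identity collapses this product to $\|h\|^2\cdot\|h\|^2=1$. This delivers (\ref{PDI}); positivity of the right-hand side is immediate since $\phi$ takes values in $(0,\infty)$.

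I do not expect a serious obstacle: the nontrivial content is already packaged in Buzano's inequality, and the three hypotheses on $\phi$ are arranged precisely to let submultiplicativity split the product before integration. The only point requiring a little care is checking that $(1+c)^2/4 \in (0,1]$ so that $\phi$ is defined there, which follows from Cauchy--Schwarz and the 1-boundedness of the frames.
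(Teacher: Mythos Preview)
Your proposal is correct and follows essentially the same route as the paper: Buzano's inequality combined with the monotonicity and submultiplicativity of $\phi$, followed by Fubini and the Parseval identity. The only cosmetic difference is that you argue pointwise and then integrate, whereas the paper works inside the double integral from the start; your additional remark that $(1+c)^2/4\in(0,1]$ is a nice check the paper leaves implicit.
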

\begin{proof}
Let $h \in \mathcal{H}_\tau \cap \mathcal{H}_\omega$. Then using the properties of $\phi$ and Lemma 	\ref{BI}, we get 
	\begin{align*}
		S_{\tau,\phi}(h)S_{\omega,\phi}(h)&=\left(\int\limits_{\Omega}|\langle h, \tau_\alpha \rangle|^2\phi(|\langle h, \tau_\alpha \rangle|^2)\,d\mu(\alpha)\right)\left(\int\limits_{\Delta}|\langle h, \omega_\beta \rangle|^2\phi(|\langle h, \omega_\beta \rangle|^2)\,d\nu(\beta)\right)\\
		&=\int\limits_{\Omega}\int\limits_{\Delta}|\langle h, \tau_\alpha \rangle|^2|\langle h, \omega_\beta \rangle|^2\phi(|\langle h, \tau_\alpha \rangle|^2)\phi(|\langle h, \omega_\beta \rangle|^2)\,d\nu(\beta)\,d\mu(\alpha)\\
		&\geq \int\limits_{\Omega}\int\limits_{\Delta}|\langle h, \tau_\alpha \rangle|^2|\langle h, \omega_\beta \rangle|^2\phi(|\langle h, \tau_\alpha \rangle|^2|\langle h, \omega_\beta \rangle|^2)\,d\nu(\beta)\,d\mu(\alpha)\\
		&\geq \int\limits_{\Omega}\int\limits_{\Delta}|\langle h, \tau_\alpha \rangle|^2|\langle h, \omega_\beta \rangle|^2\phi(|\langle h, \tau_\alpha \rangle|^2|\langle h, \omega_\beta \rangle|^2)\,d\nu(\beta)\,d\mu(\alpha)\\
		&\geq \int\limits_{\Omega}\int\limits_{\Delta}|\langle h, \tau_\alpha \rangle|^2|\langle h, \omega_\beta \rangle|^2\phi\left(\frac{\left(\|\tau_\alpha\|\omega_\beta\|+|\langle \tau_\alpha, \omega_\beta \rangle|\right)^2}{4}\right)\,d\nu(\beta)\,d\mu(\alpha)\\
		&\geq \int\limits_{\Omega}\int\limits_{\Delta}|\langle h, \tau_\alpha \rangle|^2|\langle h, \omega_\beta \rangle|^2\phi\left(\frac{\left(1+\sup_{\alpha\in \Omega, \beta\in \Delta}|\langle \tau_\alpha, \omega_\beta \rangle|\right)^2}{4}\right)\,d\nu(\beta)\,d\mu(\alpha)\\
		&=\phi\left(\frac{\left(1+\sup_{\alpha\in \Omega, \beta\in \Delta}|\langle \tau_\alpha, \omega_\beta \rangle|\right)^2}{4}\right)\int\limits_{\Omega}\int\limits_{\Delta}|\langle h, \tau_\alpha \rangle|^2|\langle h, \omega_\beta \rangle|^2\,d\nu(\beta)\,d\mu(\alpha)\\
		&=\phi\left(\frac{\left(1+\sup_{\alpha\in \Omega, \beta\in \Delta}|\langle \tau_\alpha, \omega_\beta \rangle|\right)^2}{4}\right).
	\end{align*}
\end{proof}
Theorem  \ref{PD}   gives the following question.
\begin{question}
	 Let $(\Omega, \mu)$,  $(\Delta, \nu)$ be   measure spaces,    $\mathcal{H}$ be a Hilbert space. For which pairs of 1-bounded continuous Parseval  frames $\{\tau_\alpha\}_{\alpha\in \Omega} $  and   $\{\omega_\beta\}_{\beta\in \Delta}$ for  $\mathcal{H}$, we have equality in Inequality (\ref{PDI})?
\end{question}
In 1988, Maassen  and Uffink (motivated from the conjecture  by Kraus made in 1987 \cite{KRAUS}) improved Deutsch entropic uncertainty principle.
\begin{theorem}\cite{MAASSENUFFINK} 
	(\textbf{Maassen-Uffink Entropic Uncertainty Principle})  \label{MU}
	Let $\{\tau_j\}_{j=1}^n$,  $\{\omega_k\}_{k=1}^n$ be two orthonormal bases for a  finite dimensional Hilbert space $\mathcal{H}$. Then 
	\begin{align*}
		S_\tau (h)+S_\omega (h)\geq -2 \log \left(\displaystyle \max_{1\leq j, k \leq n}|\langle\tau_j , \omega_k\rangle|\right), \quad \forall h \in \mathcal{H}_\tau \cap \mathcal{H}_\omega.
	\end{align*}	
\end{theorem}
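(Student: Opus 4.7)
The plan is to follow the original Maassen--Uffink strategy, which is based on the Riesz--Thorin interpolation theorem applied to the unitary change-of-basis matrix, followed by differentiation in the exponent at $p = 2$.

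First I would set $a_j \coloneqq \langle h, \tau_j\rangle$ and $b_k \coloneqq \langle h, \omega_k\rangle$, and let $U$ be the $n \times n$ matrix with entries $U_{jk} \coloneqq \langle \tau_j, \omega_k\rangle$. Since both $\{\tau_j\}$ and $\{\omega_k\}$ are orthonormal bases, $U$ is unitary, the relation $b = U^{*}a$ holds, and $\|a\|_{\ell^2} = \|b\|_{\ell^2} = 1$. Writing $c \coloneqq \max_{j,k}|U_{jk}|$, the two endpoint operator norms are $\|U\|_{\ell^2 \to \ell^2} = 1$ and $\|U\|_{\ell^1 \to \ell^\infty} = c$. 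The Riesz--Thorin interpolation theorem between these endpoints then yields
\begin{align*}
\|U\|_{\ell^p \to \ell^q} \leq c^{2/p - 1}, \qquad p \in [1,2], \quad \tfrac{1}{p} + \tfrac{1}{q} = 1,
\end{align*}
and applying this to $a$ gives $\|b\|_{\ell^q} \leq c^{2/p - 1}\|a\|_{\ell^p}$ for all such $p$.

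Next I would consider the function
\begin{align*}
F(p) \coloneqq (2/p - 1)\log c + \log \|a\|_{\ell^p} - \log \|b\|_{\ell^{p/(p-1)}}, \qquad p \in (1, 2],
\end{align*}
which is nonnegative on $(1,2]$ by the previous step and vanishes at $p = 2$, so the left derivative at $p = 2$ must satisfy $F'(2^{-}) \leq 0$. The hypothesis $h \in \mathcal{H}_\tau \cap \mathcal{H}_\omega$ ensures no $a_j$ or $b_k$ vanishes, so both $p$-norms are smooth at $p = 2$, and a direct calculation gives
\begin{align*}
\frac{d}{dp}\log\|a\|_{\ell^p}\Big|_{p=2} = \tfrac{1}{2}\sum_{j=1}^{n}|a_j|^2\log|a_j| = -\tfrac{1}{4}S_\tau(h),
\end{align*}
and, using $\frac{d}{dp}[p/(p-1)]\big|_{p=2} = -1$, similarly $\frac{d}{dp}\log\|b\|_{\ell^{p/(p-1)}}\big|_{p=2} = +\tfrac{1}{4}S_\omega(h)$. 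Combined with $\frac{d}{dp}[(2/p-1)\log c]\big|_{p=2} = -\tfrac{1}{2}\log c$, the inequality $F'(2^{-}) \leq 0$ becomes
\begin{align*}
-\tfrac{1}{2}\log c - \tfrac{1}{4}\bigl(S_\tau(h) + S_\omega(h)\bigr) \leq 0,
\end{align*}
which rearranges to the desired bound $S_\tau(h) + S_\omega(h) \geq -2\log c$.

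The hard part will be the careful handling of the differentiation at the boundary point $p = 2$: one must justify that the $p$-norms are genuinely differentiable at $p = 2$ (where the nonvanishing condition $h \in \mathcal{H}_\tau \cap \mathcal{H}_\omega$ is used in an essential way, since otherwise $\log|a_j|$ or $\log|b_k|$ would blow up) and that the one-sided monotonicity statement $F \geq 0 = F(2)$ on $(1,2]$ really does force $F'(2^{-}) \leq 0$. The Riesz--Thorin step is a black-box invocation and the concluding arithmetic is routine.
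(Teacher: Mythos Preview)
Your proposal is correct and follows precisely the classical Maassen--Uffink argument. The paper does not actually supply a proof of this theorem: it is quoted from \cite{MAASSENUFFINK}, and the only comment the paper makes is that ``Proofs of Theorems \ref{MU} and \ref{RT} use Riesz--Thorin interpolation and the differentiability of logarithm function,'' which is exactly your strategy. One cosmetic point: with your conventions $b_k=\sum_j \langle \tau_j,\omega_k\rangle a_j$, so the change-of-basis map is $U^{T}$ rather than $U^{*}$, but since $U^{T}$ is also unitary with the same entrywise moduli this has no effect on the argument.
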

In 2013, Ricaud  and Torr\'{e}sani \cite{RICAUDTORRESANI} showed that orthonormal bases in Theorem \ref{MU} can be improved to Parseval frames. 
\begin{theorem}\cite{RICAUDTORRESANI} 
	(\textbf{Ricaud-Torr\'{e}sani Entropic Uncertainty Principle})  \label{RT}
	Let $\{\tau_j\}_{j=1}^n$,  $\{\omega_k\}_{k=1}^m$ be two Parseval frames  for a  finite dimensional Hilbert space $\mathcal{H}$. Then 
	\begin{align*}
		S_\tau (h)+S_\omega (h)\geq -2 \log \left(\displaystyle \max_{1\leq j \leq n, 1\leq k \leq m}|\langle\tau_j , \omega_k\rangle|\right), \quad \forall h \in \mathcal{H}_\tau \cap \mathcal{H}_\omega.
	\end{align*}	
\end{theorem}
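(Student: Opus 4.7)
The plan is to adapt the classical Maassen-Uffink argument to Parseval frames via Riesz-Thorin interpolation. Set $c := \max_{j,k}|\langle\tau_j,\omega_k\rangle|$ and define the analysis operators $T_\tau : \mathcal{H} \to \mathbb{K}^n$ and $T_\omega : \mathcal{H} \to \mathbb{K}^m$ by $(T_\tau h)_j := \langle h, \tau_j\rangle$ and $(T_\omega h)_k := \langle h, \omega_k\rangle$. The Parseval hypothesis is equivalent to $T_\tau^* T_\tau = I_{\mathcal{H}}$, so that $h = \sum_j \langle h,\tau_j\rangle\tau_j$, and hence $\langle h,\omega_k\rangle = \sum_j \langle\tau_j,\omega_k\rangle\langle h,\tau_j\rangle$. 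Writing $x := T_\tau h$ and $y := T_\omega h$, this is the matrix identity $y = M x$ with $M := T_\omega T_\tau^*$ whose entries are $M_{kj} = \langle\tau_j,\omega_k\rangle$.

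Next I would estimate two operator norms of $M$. The bound $\|M\|_{\ell^1\to\ell^\infty} = c$ is immediate from the definition, while the factorization $M = T_\omega T_\tau^*$, together with the fact that the Parseval analysis and synthesis operators are non-expansive on $\ell^2$, yields $\|M\|_{\ell^2\to\ell^2} \leq 1$. Riesz-Thorin interpolation between the corner exponents $(1,\infty)$ and $(2,2)$ then produces
\[
\|Mx\|_{p'} \leq c^{2/p - 1}\,\|x\|_p, \qquad p \in [1,2], \quad \tfrac{1}{p}+\tfrac{1}{p'} = 1.
\]

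Since $h \in \mathcal{H}_\tau \cap \mathcal{H}_\omega$ is a unit vector, $\|x\|_2 = \|y\|_2 = 1$, so the logarithmic form
\[
\log\|y\|_{p'} - \log\|x\|_p \;\leq\; (2/p - 1)\log c
\]
holds on $[1,2]$ with \emph{equality} at the right endpoint $p = 2$. A routine differentiation in $p$ at $p = 2$ gives Shannon-entropy values $-\tfrac{1}{4}S_\tau(h)$ for the derivative of $\log\|x\|_p$ and $+\tfrac{1}{4}S_\omega(h)$ for that of $\log\|y\|_{p'}$, while the right-hand side has derivative $-\tfrac{1}{2}\log c$. Comparing left-sided slopes at $p = 2$ (the inequality direction flips when we divide by $p-2<0$) produces $\tfrac{1}{4}(S_\tau(h) + S_\omega(h)) \geq -\tfrac{1}{2}\log c$, which rearranges to the claimed bound.

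The main obstacle I anticipate is making the endpoint differentiation completely rigorous: one must verify that $\log\|x\|_p$ and $\log\|y\|_{p'}$ are $C^1$ in a neighbourhood of $p = 2$, and that the formal derivative formulas genuinely equal $\pm\tfrac{1}{4}S_\tau(h)$ and $\pm\tfrac{1}{4}S_\omega(h)$. Here the hypothesis $h \in \mathcal{H}_\tau \cap \mathcal{H}_\omega$ is essential: it guarantees that no component of $x$ or $y$ vanishes, which is exactly what is needed to keep the terms $|x_j|^p\log|x_j|$ (and their counterparts for $y$) integrable and smooth in $p$ near $p = 2$. Once the differentiability is in hand, both the Riesz-Thorin step and the one-sided slope comparison at the endpoint are standard.
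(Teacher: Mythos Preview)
Your proposal is correct and follows precisely the method the paper itself attributes to this theorem: the paper does not supply its own proof of the Ricaud--Torr\'{e}sani result but merely states that ``Proofs of Theorems \ref{MU} and \ref{RT} use Riesz--Thorin interpolation and the differentiability of logarithm function,'' and that is exactly what you carry out. The factorization $M=T_\omega T_\tau^*$, the two endpoint norm bounds $\|M\|_{\ell^1\to\ell^\infty}=c$ and $\|M\|_{\ell^2\to\ell^2}\le 1$, the interpolated estimate, and the one-sided derivative comparison at $p=2$ are all standard and correctly executed; your remark that the hypothesis $h\in\mathcal{H}_\tau\cap\mathcal{H}_\omega$ ensures smoothness of $p\mapsto\log\|x\|_p$ near $p=2$ is the right justification for the endpoint differentiation.
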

Proofs of  Theorems \ref{MU} and  \ref{RT} use Riesz-Thorin interpolation  and the differentiability   of logarithm function. We therefore  end by formulating the following problem. 
\begin{problem}
	Let $\{\tau_\alpha\}_{\alpha\in \Omega}$ and $\{\omega_\beta\}_{\beta\in \Delta}$ be 1-bounded 
	continuous Parseval frames   for $\mathcal{H}$. For which functions $\phi$, we have 
	\begin{align*}
		S_{\tau,\phi} (h)S_{\omega, \phi} (h)\geq   \phi\left(\left(\sup_{\alpha\in \Omega, \beta\in \Delta}|\langle \tau_\alpha, \omega_\beta \rangle|\right)^2\right), \quad \forall h \in \mathcal{H}_\tau \cap \mathcal{H}_\omega.
	\end{align*}		
\end{problem}

 \bibliographystyle{plain}
 \bibliography{reference.bib}

\end{document}